\theoremstyle{proclaim}
\newtheorem{theorem}{Theorem}[section]
\newtheorem{proposition}[theorem]{Proposition}
\newtheorem{lemma}[theorem]{Lemma}
\newtheorem{corollary}[theorem]{Corollary}
\theoremstyle{statement}
\newtheorem{remark}[theorem]{Remark}
\newcommand\R{\mathbb{R}}
\newcommand\C{\mathbb{C}}
\newcommand\eps{\varepsilon}
\begin{document}

\issueinfo{vv}{n}{yyyy} 

\pagespan{101}{103}

\title{Commutators close to the identity}

\author{Terence Tao}
\address{UCLA Department of Mathematics, Los Angeles, CA 90095-1555.}
\email{tao@math.ucla.edu}

\begin{subjclass}
47A63
\end{subjclass}

\begin{abstract}  Let $D,X \in B(H)$ be bounded operators on an infinite dimensional Hilbert space $H$.  If the commutator $[D,X] = DX-XD$ lies within $\eps$ in operator norm of the identity operator $1_{B(H)}$, then it was observed by Popa that one has the lower bound $\| D \| \|X\| \geq \frac{1}{2} \log \frac{1}{\eps}$ on the product of the operator norms of $D,X$; this is a quantitative version of the Wintner-Wielandt theorem that $1_{B(H)}$ cannot be expressed as the commutator of bounded operators.  On the other hand, it follows easily from the work of Brown and Pearcy that one can construct examples in which $\|D\| \|X\| = O(\eps^{-2})$.  In this note, we improve the Brown-Pearcy construction to obtain examples of $D,X$ with $\| [D,X] - 1_{B(H)} \| \leq \eps$ and $\| D\| \|X\| = O( \log^{5} \frac{1}{\eps} )$.  
\end{abstract}

\maketitle


\section*{Introduction}

Let $H$ be a real or complex Hilbert space, and let $B(H)$ be the Banach algebra of bounded operators on this space, equipped with the operator norm $\| \| = \| \|_{\mathrm{op}}$.  Given any two operators $D,X$ in this space, we can form their commutator $[D,X] \coloneqq DX - XD$.  It is a classical result of Wintner \cite{wintner} and Wielandt \cite{wielandt} that this commutator $[D,X]$ cannot equal the identity operator $1_{B(H)}$ of $B(H)$; indeed this result holds with $B(H)$ replaced by any other Banach algebra.  The requirement that $D,X$ be bounded is of course crucial, as the well known unbounded example $Df \coloneqq \frac{d}{dx}f$, $X \coloneqq xf$ on $L^2(\R)$ shows.  It was observed by Popa \cite{popa} that one has the following more quantitative version of the Wintner-Wielandt theorem:

\begin{theorem}\label{popa-thm}  Let $D,X \in B(H)$ be such that
$$ \| [D,X] - 1_{B(H)} \| \leq \eps $$
for some $\eps>0$.  Then one has
$$ \|D\| \|X \| \geq \frac{1}{2} \log \frac{1}{\eps}.$$
\end{theorem}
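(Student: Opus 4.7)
My approach is to iterate the Leibniz-like identity $[D, X^n] = \sum_{k=0}^{n-1} X^k [D,X] X^{n-1-k}$ that underlies the original Wintner--Wielandt theorem, now carrying the perturbation along. Writing $[D,X] = 1_{B(H)} + E$ with $\|E\| \leq \eps$, substitution gives
\begin{equation*}
  [D, X^n] = n X^{n-1} + \sum_{k=0}^{n-1} X^k E X^{n-1-k}.
\end{equation*}
Taking operator norms, using submultiplicativity and the elementary bound $\|[D,X^n]\| \leq 2\|D\|\|X^n\|$, I obtain the one-step estimate
\begin{equation*}
  n \|X^{n-1}\| \leq 2 \|D\| \|X^n\| + n \eps \|X\|^{n-1}.
\end{equation*}

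The key move is to read this as a recursion. Assuming $\|X\| > 0$ (otherwise $\|[D,X]-1_{B(H)}\| = 1$, so $\eps \geq 1$ and the conclusion is trivial), set $v_n \coloneqq \|X^n\|/\|X\|^n \in [0,1]$ and $b \coloneqq 2 \|D\| \|X\|$, so the estimate rearranges to $v_n \geq \frac{n}{b}(v_{n-1} - \eps)$. Starting from $v_0 = 1$, a routine induction yields
\begin{equation*}
  v_n \geq \frac{n!}{b^n}\Bigl( 1 - \eps \sum_{j=0}^{n-1} \frac{b^j}{j!} \Bigr) \geq \frac{n!}{b^n}(1 - \eps e^b).
\end{equation*}

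To conclude, I confront this with the trivial upper bound $v_n \leq 1$: if $\eps e^b < 1$, then $1 - \eps e^b$ is a fixed positive constant while $n!/b^n \to \infty$ as $n \to \infty$, a contradiction. Hence $\eps e^b \geq 1$, which is precisely $\|D\|\|X\| \geq \frac{1}{2}\log\frac{1}{\eps}$. The main obstacle I anticipate is organizing the induction so that the cumulative error contributed by the $E$-terms is controlled by the exponential partial sums of $e^b$; this is the crucial structural feature that allows the factorial growth of $n!/b^n$ to dominate and produce the logarithmic lower bound, rather than merely recovering the qualitative Wintner--Wielandt obstruction.
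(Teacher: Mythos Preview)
Your proof is correct and follows essentially the same route as the paper's. Both arguments start from the Leibniz identity $[D,X^n] = nX^{n-1} + \sum_k X^k E X^{n-1-k}$ and extract the inequality $n\|X^{n-1}\| \leq 2\|D\|\,\|X^n\| + n\eps\|X\|^{n-1}$; the paper normalizes $\|D\|=\tfrac12$ and sums these inequalities over $n$ with weights $1/n!$ so that the left and right sides telescope to $1 \leq \eps e^{\|X\|}$, whereas you keep $b=2\|D\|\|X\|$ general and unwind the same recursion inductively to reach $1 \geq v_n \geq \tfrac{n!}{b^n}(1-\eps e^b)$, which forces $\eps e^b \geq 1$. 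These are two packagings of the same computation. (One trivial edge case you might mention for completeness: $\|D\|=0$ also gives $\eps \geq 1$, making the conclusion vacuous, just as in your $\|X\|=0$ case.)
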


\begin{proof}  For the convenience of the reader we reproduce the argument from \cite{popa} here.  By multiplying $D$ by a constant and dividing $X$ by the same constant, we may normalise $\|D\|=\frac{1}{2}$.    If we write $[D,X] = 1_{B(H)}+E$, then $\|E\| \leq \eps$, and a routine induction shows that one has the identity
$$ [D,X^n] = nX^{n-1} + X^{n-1} E + X^{n-2} E X + \dots + E X^{n-1}$$
for any $n \geq 1$.  In particular, by the triangle inequality one has
$$ n \|X^{n-1} \| \leq \| [D,X^n] \| + n \eps \|X\|^{n-1}.$$
Since $\|D\| = \frac{1}{2}$, we have from the triangle inequality that $\| [D,X^n] \| \leq \|X^n\|$.  Applying this bound, dividing by $n!$, and then summing in $n$, we conclude that
$$ \sum_{n=0}^\infty \frac{\|X^n\|}{n!} \leq \sum_{n=1}^\infty \frac{\|X^n\|}{n!} + \eps \sum_{n=0}^\infty \frac{\|X\|^n}{n!}.$$
We can cancel the absolutely convergent sum $\sum_{n=1}^\infty \frac{\|X^n\|}{n!}$ to conclude that
$$ 1 \leq \eps \exp( \| X \| )$$
giving the claim.
\end{proof}

We remark that the above argument is valid in any Banach algebra, but in this paper we will restrict attention to the specific algebras $B(H)$.

In \cite[Remark 2.9]{popa}, Popa raised the question as to whether the bound $\frac{1}{2} \log \frac{1}{\eps}$ could be significantly improved.  When $H$ is finite dimensional the question is vacuous, since $[D,X]$ then has trace zero and thus must have at least one eigenvalue outside the disk $\{ z: |z-1| < 1 \}$, so that $\|[D,X] - 1_{B(H)}\| \geq 1$ for all $D,X \in B(H)$.  However, in infinite dimensions, it follows from the work of Brown and Pearcy \cite{bp} that $[D,X]$ can be made arbitrarily close to $1$ in operator norm for $D,X \in B(H)$.  In fact we have the following result, essentially due to Popa \cite{popa}:

\begin{proposition}\label{bpp}  Let $H$ be infinite dimensional.  Then for any $0 < \eps \leq 1$, there exists $D,X \in B(H)$ with
$$ \| [D,X] - 1_{B(H)} \| \leq \eps $$
and
$$ \| D \| \| X \| = O( \eps^{-2} ).$$
\end{proposition}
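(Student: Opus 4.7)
The plan is to exhibit $D$ and $X$ explicitly by truncating the classical unbounded identity $[D_\infty, X_\infty] = 1$, where $X_\infty$ is the unilateral forward shift on $\ell^2(\mathbb{N})$ and $D_\infty e_n = n e_{n-1}$ is the unbounded weighted backward shift. Truncating these to an $N$-dimensional block with basis $e_0, \dots, e_{N-1}$ (so $X_N e_{N-1} = 0$ and $D_N$ is restricted correspondingly), a direct calculation gives
\[ [D_N, X_N] = I_N - N P_{N-1}, \]
where $P_{N-1}$ is the rank-one projection onto $e_{N-1}$; moreover $\|X_N\| = 1$ and $\|D_N\| = N-1$, so $\|D_N\|\|X_N\| = O(N)$. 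On this single block the commutator agrees with the identity except on one basis vector, where the deficit has magnitude $N$. The trace-zero obstruction forbids any finite-dimensional improvement, so I must pass to infinite dimensions.

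To do this I would take $H = \bigoplus_{k\geq 0} H_k$ with each $H_k \cong \mathbb{C}^N$, place the above $(D_N, X_N)$ on each block, and couple adjacent blocks by extending $X$ so that it maps the top vector of $H_k$ to the bottom vector of $H_{k+1}$. Under a relabelling, $X$ then becomes the ordinary forward shift on $\ell^2(\mathbb{N})$. The intra-block computation still contributes a block-diagonal deficit $-N\sum_k P_k$, with each $P_k$ projecting onto the top vector of $H_k$; however, the new cross-commutator terms between the intra-block part of $D$ and the inter-block part of $X$ are designed to cancel the diagonal deficit to leading order, producing in its place a shift-type error of controlled operator norm. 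The remaining boundary contributions can be absorbed into the infinite tail of the direct sum in the style of Brown and Pearcy \cite{bp}, and a final rescaling reduces $\|[D,X] - 1_{B(H)}\|$ to $O(\eps)$ while keeping the product of operator norms $O(N)$.

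Choosing $N \sim \eps^{-2}$ then yields $\|D\|\|X\| = O(N) = O(\eps^{-2})$ together with $\|[D,X] - 1_{B(H)}\| \leq \eps$, as required. The main obstacle I expect is verifying that the cross-commutator terms actually cancel the block-diagonal deficit and that the residual error has operator norm $O(\eps)$ rather than $O(1)$; this is a bookkeeping exercise essentially carried out in \cite{bp} and \cite{popa}. The loss of $\eps^{-2}$, as opposed to the sharper bound $\eps^{-1}$ one might naively hope for, presumably reflects the fact that the simplest coupling cancels only the leading piece of the deficit and leaves a secondary error of size $\sqrt{N}$ times $\eps$, forcing $N$ to be taken at scale $\eps^{-2}$ in order to achieve $\|[D,X]-1_{B(H)}\| \leq \eps$.
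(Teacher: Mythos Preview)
Your proposal has a genuine gap at the crucial step. With $D$ kept block-diagonal (so $D e_{kN}=0$) and $X$ extended to the full forward shift, write $X = X_{\mathrm{in}} + X_{\mathrm{out}}$, where $X_{\mathrm{out}}$ sends $e_{kN+N-1}$ to $e_{(k+1)N}$ and annihilates every other basis vector. Then $D X_{\mathrm{out}} e_{kN+N-1} = D e_{(k+1)N} = 0$ and $X_{\mathrm{out}} D e_{kN+N-1} = (N-1) X_{\mathrm{out}} e_{kN+N-2} = 0$, so $[D, X_{\mathrm{out}}] = 0$: the cross-commutator contributes nothing, and $[D,X]$ is still $1 - N \sum_k P_{kN+N-1}$, with error of operator norm $N$. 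More generally, if $D$ and $X$ are a weighted backward shift and the forward shift on $\ell^2(\mathbb{N})$, then $[D,X]$ is diagonal with telescoping entries $w_{j+1}-w_j$, and boundedness of the weights forces infinitely many of these to lie far from $1$; this is the same obstruction you already identified in finite dimensions, and no ``final rescaling'' repairs it, since conjugation by a diagonal operator fixes a diagonal error. Your appeal to \cite{bp} and \cite{popa} for the bookkeeping does not help: neither paper uses periodic block shifts, and the cancellation mechanism you describe is not present there. The sentence explaining the exponent~$2$ via a ``secondary error of size $\sqrt{N}$ times $\eps$'' is also not meaningful as written, since $\eps$ has not entered the construction at that point.

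The paper's argument is quite different and much shorter. One identifies $B(H)$ with $M_2(B(H))$ and invokes Brown--Pearcy \cite{bp} as a black box to write the unipotent matrix $\begin{pmatrix} 1_{B(H)} & 1_{B(H)} \\ 0 & 1_{B(H)} \end{pmatrix}$ as $[D_1,X_1]$ with $\|D_1\|, \|X_1\| = O(1)$. Conjugating both $D_1$ and $X_1$ by $S_\eps \coloneqq \begin{pmatrix} \eps 1_{B(H)} & 0 \\ 0 & 1_{B(H)} \end{pmatrix}$ turns the commutator into $\begin{pmatrix} 1_{B(H)} & \eps 1_{B(H)} \\ 0 & 1_{B(H)} \end{pmatrix}$, which is within $\eps$ of the identity, while inspection of matrix entries shows each of $D = S_\eps D_1 S_\eps^{-1}$ and $X = S_\eps X_1 S_\eps^{-1}$ has norm $O(\eps^{-1})$. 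Thus the $\eps^{-2}$ arises simply as the product of two factors of $\eps^{-1}$ coming from the conjugation, not from any choice of block size $N$.
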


Here and in the sequel we use the asymptotic notation $A = O(B)$ to denote an estimate of the form $|A| \leq CB$ for an absolute constant $C$.

\begin{proof}  As $H$ is isometric to the direct sum $H \oplus H$ of two copies of $H$ with itself, $B(H)$ is isometric to $B(H \oplus H)$, which is in turn isometric to the algebra $M_2(B(H))$ of $2 \times 2$ matrices with elements in $B(H)$, again equipped with the operator norm.  By the results of Brown and Pearcy \cite{bp}, the unipotent matrix
$$ \begin{pmatrix} 1_{B(H)} & 1_{B(H)} \\ 0 & 1_{B(H)} \end{pmatrix} $$
can be expressed as a commutator $[D_1,X_1]$ of two bounded operators $D_1,X_1 \in M_2(B(H))$; an inspection of the arguments in \cite{bp} (see also \cite{az}, \cite{popa}) reveals that we have the operator norm bounds $\|D_1\|, \|X_1\| = O(1)$.  If we then conjugate $D_1$ and $X_1$ by the diagonal matrix
$$ S_\eps \coloneqq \begin{pmatrix} \eps 1_{B(H)} & 0 \\ 0 & 1_{B(H)} \end{pmatrix}$$
we see that the matrices $D := S_\eps D_1 S_\eps^{-1}$, $X \coloneqq S_\eps X_1 S_\eps^{-1}$ have commutator
$$ [D, X] = \begin{pmatrix} 1_{B(H)} & \eps 1_{B(H)} \\ 0 & 1_{B(H)} \end{pmatrix}.$$
By inspection of matrix coefficients, we see that $\| D\|, \|X\| = O(\eps^{-1})$.
\end{proof}

In this note we extend this construction to obtain a bound closer to that in Theorem \ref{popa-thm}:

\begin{theorem}\label{main}  Let $H$ be infinite dimensional.  Then for any $0 < \eps \leq\frac{1}{2}$, there exists $D,X \in B(H)$ with
$$ \| [D,X] - 1_{B(H)} \| \leq \eps $$
such that
$$ \| D \| \| X \| = O\left( \log^{5} \frac{1}{\eps} \right).$$
\end{theorem}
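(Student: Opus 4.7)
The plan is to refine the Brown--Pearcy construction of Proposition~\ref{bpp} so that, instead of squeezing a $2\times 2$ unipotent by a conjugation that costs a factor of $\eps^{-1}$ in each of $\|D\|$ and $\|X\|$, one builds $D, X$ directly on a space with $N$ block levels, where $N$ is a small power of $\log(1/\eps)$, paying only polylogarithmic factors at each level.

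The first step I would try is a discrete creation/annihilation ansatz. Realise $H$ as $\ell^2(\{0,1,\dots,N\}) \otimes H_0$ for some infinite-dimensional $H_0$, and let $D$ be the weighted shift with $D(e_k \otimes v) = \alpha_k\, e_{k+1}\otimes v$ and $X$ the opposite weighted shift with $X(e_k \otimes v) = \beta_{k-1}\, e_{k-1}\otimes v$ for scalar weights $\alpha_k, \beta_k$. A direct computation shows that $[D,X]$ is block-diagonal with $k$-th diagonal block $(\alpha_{k-1}\beta_{k-1} - \alpha_k \beta_k)\cdot 1_{B(H_0)}$, so demanding each interior block equal $1_{B(H_0)}$ forces $\alpha_k \beta_k \sim k$, and taking $\alpha_k = \beta_k \sim \sqrt{k}$ gives $\|D\|, \|X\| = O(\sqrt{N})$, hence $\|D\|\|X\| = O(N)$. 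The difficulty with this naive attempt is that the identity relation only holds in the bulk: at the top and bottom of the truncated shift an uncompensated term of size $\sim N$ appears in the commutator, producing an unacceptably large boundary error.

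The second step is to repair this boundary defect while preserving the polylogarithmic norm budget. I would attempt a smooth cutoff or a telescoping multiscale construction: replace the hard truncation of the weighted shift by a smooth transition over a region of length $L$, at the cost of a commutator error of size $O(1/L)$ in the transition region, and then glue or telescope several such constructions at geometrically spaced scales so that the boundary errors of consecutive pieces cancel against each other. Each smoothing or gluing step contributes another logarithmic factor to the norms, and carrying the bookkeeping through $O(\log(1/\eps))$ scales should give a final bound of the form $O(\log^{k}(1/\eps))$ for some small constant $k$; the exponent $5$ in the statement presumably reflects the best balance of weight growth, transition length, and number of scales in the construction.

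The main obstacle is precisely this boundary cancellation. One must arrange that the localized commutator error from each truncated weighted shift is absorbed by a carefully chosen correction elsewhere in the construction, without introducing new errors of comparable size and without inflating $\|D\|$ or $\|X\|$ beyond polylogarithmically in $1/\eps$. This is analogous to extracting approximate canonical commutation relations from truncated creation and annihilation operators, but with the extra demand that the residual error be genuinely small in operator norm rather than merely in a weaker (e.g.\ trace-class or pointwise) sense.
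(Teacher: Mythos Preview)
Your scalar--weighted shift ansatz runs into a trace obstruction that no amount of smoothing or telescoping can remove. With $D,X$ of the form you describe (weights $\alpha_k,\beta_k$ scalar), the commutator $[D,X]$ is block--diagonal with $k$-th block $(\alpha_{k-1}\beta_{k-1}-\alpha_k\beta_k)\cdot 1_{B(H_0)}$, and these scalars telescope to zero. Hence the sum of the $N{+}1$ diagonal blocks of $[D,X]$ vanishes, while the sum of the diagonal blocks of $1_{B(H)}$ is $(N{+}1)\cdot 1_{B(H_0)}$. At least one block of $[D,X]-1_{B(H)}$ therefore has norm at least $1$, so $\|[D,X]-1_{B(H)}\|\geq 1$ for every choice of scalar weights. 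A smooth cutoff over a region of length $L$ merely redistributes this total defect of $N{+}1$ over $L$ diagonal blocks; the maximum deviation is still $\geq 1$, not $O(1/L)$. The same applies to any multiscale gluing that stays inside the scalar--weight framework: the obstruction is exactly the one that forces $\|[D,X]-1\|\geq 1$ in finite dimensions, and you have not yet used the infinite dimensionality of $H_0$ anywhere.

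The paper evades this in two essential ways, neither of which appears in your sketch. First, it arranges for the error $[D,X]-1$ to sit in a single \emph{off--diagonal} (top--right) entry of an $n\times n$ block matrix rather than on the diagonal; conjugating by $S_\mu=\mathrm{diag}(\mu^{n-1},\dots,1)$ then damps that entry by $\mu^{n-1}$ while inflating $\|D\|,\|X\|$ only by $O(\mu^{-2})$, so taking $\mu$ fixed and $n\sim\log(1/\eps)$ gives the polylogarithmic bound. Second, and crucially, forcing the last column of $[D,X]-1$ to vanish below the top--right entry amounts to solving a system $[v,b_i]+[u,b_{i-1}]=(\text{given})$ for $b_1,\dots,b_n\in B(H)$. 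This is impossible with scalar $u,v$ (same trace reason), so the paper takes $u,v$ to be isometries onto orthogonal halves of $H$, uses the relations $u^*u=v^*v=1$, $u^*v=0$ to build an explicit approximate right inverse, and closes with a Neumann series and a contraction--mapping argument for the residual nonlinear terms. The infinite dimensionality of $H$ enters precisely through the existence of such $u,v$; your proposal never invokes it, which is why the boundary defect remained unfixable.
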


The exponent of $5$ could certainly be improved, but we have not optimised it here, as we do not believe that these arguments will be able to attain the exponent $1$ in Theorem \ref{popa-thm}.  Nevertheless, the author believes that the bound in Theorem \ref{popa-thm} is essentially optimal up to constants in the limit $\eps \to 0$.

We now briefly discuss the methods of proof.  Instead of the $2 \times 2$ matrix algebra $M_2(B(H))$ used in the proof of Proposition \ref{bpp}, we use the $n \times n$ matrix algebra $M_n(B(H))$ for some large $n$ (the optimal choice turns out to be comparable to $\log \frac{1}{\eps}$).  The strategy is to try to locate almost upper-triangular matrices 
$$
D, X =
\begin{pmatrix}
* & * & * & \dots & * & * \\
* & * & * & \dots & * & * \\
0 & * & * & \dots & * & * \\
\vdots & \vdots & \vdots & \ddots & \vdots & \vdots \\
0 & 0 & 0 & \dots & * & * \\
0 & 0 & 0 & \dots & * & *
\end{pmatrix},
$$
where the asterisks denote coefficients in $B(H)$ to be determined, whose commutator $[D,X]$ is equal to the identity except in the top right-corner:
$$
[D, X] =
\begin{pmatrix}
1_{B(H)} & 0 & 0 & \dots & 0 & * \\
0 & 1_{B(H)} & 0 & \dots & 0 & 0 \\
0 & 0 & 1_{B(H)} & \dots & 0 & 0 \\
\vdots & \vdots & \vdots & \ddots & \vdots & \vdots \\
0 & 0 & 0 & \dots & 1_{B(H)} & 0 \\
0 & 0 & 0 & \dots & 0  & 1_{B(H)}
\end{pmatrix}.
$$
If one then conjugates $D,X$ by the diagonal matrix $S_\mu \coloneqq \mathrm{diag}( \mu^{n-1}, \mu^{n-2}, \dots, 1 )$ for some scalar $\mu>0$, then one will obtain matrices $D_\mu := S_\mu D S_\mu^{-1}$, $X_\mu := S_\mu X S_\mu^{-1}$ whose norm is bounded by $O(\mu^{-1})$ for any fixed $n$, such that $\|[D_\mu,X_\mu] - 1_{M_n(B(H))}\|$ is bounded by $O( \mu^{n-1} )$ for a fixed $n$.  The $n=2$ case of this argument recovers Proposition \ref{bpp}, and by optimising in $n$ and $\mu$ we will obtain Theorem \ref{main}.

It remains to select the matrices $D,X$.  After some experimentation, the author found it convenient to work with matrices $X$ of the form
$$
X \coloneqq 
\begin{pmatrix}
0 & 0 & 0 & \dots & 0 & b_1 \\
1_{B(H)} & 0 & 0 & \dots & 0 & b_2 \\
0 & 1_{B(H)} & 0 & \dots & 0 & b_3 \\
\vdots & \vdots & \vdots & \ddots & \vdots & \vdots \\
0 & 0 & 0 & \dots & 0 & b_{n-1} \\
0 & 0 & 0 & \dots & 1_{B(H)} & b_n
\end{pmatrix}
$$
for some coefficients $b_1,\dots,b_n \in B(H)$ to be determined.  We remark that this ansatz is in fact not particularly restrictive, since any almost upper triangular matrix whose entries on the lower diagonal are all invertible can be conjugated to be of this form.  This can be seen by first conjugating by a diagonal matrix to make all the lower diagonal entries equal to the identity, then conjugating by an upper diagonal matrix to place the diagonal entries into the right form, and continuing upwards until reaching the desired ansatz.

In order for $[D,X]$ to take the desired form, one can calculate that $D$ must equal the matrix
$$
D = \begin{pmatrix}
v & 1_{B(H)} & 0 & \dots & 0 & b_1 u \\
u & v & 2 \cdot 1_{B(H)} & \dots & 0 & b_2 u \\
0 & u & v & \dots & 0 & b_3 u \\
\vdots & \vdots & \vdots & \ddots & \vdots & \vdots \\
0 & 0 & 0 & \dots & v & (n-1) 1_{B(H)} + b_{n-1} u \\
0 & 0 & 0 & \dots & u & v + b_n u
\end{pmatrix}
$$
for some further coefficients $u,v \in B(H)$, which need to solve the system of equations
\begin{equation}\label{system-0}
 [v, b_i] + [u, b_{i-1}] + i b_{i+1} + b_i [u, b_n] = 0
\end{equation}
for $i=2,\dots,n-1$, and also
\begin{equation}\label{system-n-0}
 [v, b_n] + [u, b_{n-1}] + b_n [u, b_n] = n \cdot 1_{B(H)}.
\end{equation}
The task then reduces to locating operators $u,v,b_1,\dots,b_n \in B(H)$ solving this system, and obtaining good bounds for the norms of these operators.  By a routine perturbative analysis involving the contraction mapping theorem, as well as a simple renormalisation, matters then reduce to locating operators $u,v \in B(H)$ for which the operator $T: B(H)^n \to B(H)^{n-1}$ defined by
$$ T(b_i)_{i=1}^n \coloneqq ( [v,b_i] + [u,b_{i-1}] )_{i=2}^n$$
has a bounded right inverse.  This would be impossible in finite dimensions, as the operators $[v,b_i] + [u,b_{i-1}]$ would necessarily be of zero trace in that case; but it turns out that if one uses the infinite dimensionality of $H$ to write $H = H_1 \oplus H_2$ for two orthogonal subspaces $H_1,H_2$ isometric to $H$, and lets $u,v \in B(H)$ be isometries from $H$ to $H_1$, $H_2$ respectively, then one will be able to construct such a right inverse using Neumann series.

\begin{remark}  (This remark is due to Tobias Fritz, see {\tt terrytao.wordpress.com/2018/04/11}.)  For any constants $C,\eps>0$, one can ask the question of whether there exist operators $D,X \in B(H)$ such that $\|D\|, \|X\| \leq C$ and $\| [D,X] - 1_{B(H)} \| \leq \eps$.  One can rephrase these constraints as semidefinite constraints 
$$C^2 - DD^* \geq 0; \quad C^2 - XX^* \geq 0; \quad \eps^2 - ([D,X] - 1_{B(H)}) ([D,X] - 1_{B(H)})^* \geq 0$$
where the ordering $\geq$ is in the sense of positive semidefinite operators.  One can then use semidefinite programming techniques as in \cite{pna}, applied to a Gram matrix 
$$(\langle P(D,X) \phi, Q(D,X) \phi \rangle)_{P,Q \in {\mathcal P}}$$ 
for some test state $\phi$ and some collection ${\mathcal P}$ of noncommutative monomials $P = P(D,X)$, to test if such inequalities are satisfiable for any given $C, \eps$.  Unfortunately, if one restricts the set of monomials ${\mathcal P}$ to a computationally feasible set, it does not appear that one obtains any non-trivial pairs of satisfiable $(C,\eps)$ in this fashion; see the comments at {\tt terrytao.wordpress.com/2018/04/11} for further discussion.
\end{remark}

The author is supported by NSF grant DMS-1266164 and by a Simons Investigator Award.  We thank Sorin Popa for suggesting this problem, and Tobias Fritz and Will Sawin for helpful comments.  We also thank the anonymous referee for pointing out several corrections (in particular, improving the exponent in Theorem \ref{main} from $16$ to $5$).  Part of this research was performed while the author was visiting the Institute for Pure and Applied Mathematics (IPAM), which is supported by the National Science Foundation.

\section{Proof of theorem}

Let $H$ be an infinite dimensional vector space, let $0 < \eps < 1$, and let $n \geq 1$ be a natural number depending on $\eps$ to be chosen later.  Let $M_n( B( H) )$ denote the Banach algebra of $n \times n$ matrices with entries in $B(H)$, equipped with the operator norm; this is isometric to $B( H^{\oplus n})$ and thus to $B(H)$.  For any statement $S$, we let $1_S$ denote its indicator, thus $1_S=1$ when $S$ is true and $1_S=0$ when $S$ is false.

The first step is to reduce to the system \eqref{system-0}, \eqref{system-n-0} mentioned in the introduction.  Actually for technical reasons it is convenient to also introduce a perturbative parameter $\delta>0$.  We begin with the following commutator calculation:

\begin{lemma}[Commutator calculation]\label{comm} Let $u, v, b_1,\dots,b_n \in B(H)$, and let $\delta > 0$.  Let $X = (X_{ij})_{1 \leq i, j \leq n} \in M_n(B(H))$ denote the matrix with entries
$$ X_{ij} \coloneqq 1_{B(H)} 1_{i=j+1} + \delta b_i 1_{j=n} $$
for $1 \leq i,j \leq n$, thus
$$
X = 
\begin{pmatrix}
0 & 0 & 0 & \dots & 0 & \delta b_1 \\
1_{B(H)} & 0 & 0 & \dots & 0 & \delta b_2 \\
0 & 1_{B(H)} & 0 & \dots & 0 & \delta b_3 \\
\vdots & \vdots & \vdots & \ddots & \vdots & \vdots \\
0 & 0 & 0 & \dots & 0 & \delta b_{n-1} \\
0 & 0 & 0 & \dots & 1_{B(H)} & \delta b_n
\end{pmatrix}.
$$
Let $D = (D_{ij})_{1 \leq i, j \leq n} \in M_n(B(H))$ denote the matrix with entries
$$
D_{ij} \coloneqq \frac{1}{\delta} u 1_{i=j+1} + \frac{1}{\delta} v 1_{i=j} + i 1_{B(H)} 1_{j=i+1} + \delta b_i u 1_{j=n},$$
thus
$$
D = \begin{pmatrix}
\frac{1}{\delta} v & 1_{B(H)} & 0 & \dots & 0 & \delta b_1 u \\
\frac{1}{\delta} u & \frac{1}{\delta} v & 2 \cdot 1_{B(H)} & \dots & 0 & \delta b_2 u \\
0 & \frac{1}{\delta} u & \frac{1}{\delta} v & \dots & 0 & \delta b_3 u \\
\vdots & \vdots & \vdots & \ddots & \vdots & \vdots \\
0 & 0 & 0 & \dots & \frac{1}{\delta} v & (n-1) 1_{B(H)} + \delta b_{n-1} u \\
0 & 0 & 0 & \dots & \frac{1}{\delta} u & \frac{1}{\delta} v + \delta b_n u
\end{pmatrix}.
$$
Then the commutator $[D,X] = ([D,X]_{ij})_{1 \leq i,j \leq n}$ has entries
$$ [D,X]_{ij} = 1_{i=j} + 
([v, b_i] + [u, b_{i-1}] + i \delta b_{i+1} + \delta b_i [u, b_n] - n \cdot 1_{B(H)} 1_{i=n}) 1_{j=n}
$$
with the conventions that $b_0=b_{n+1}=0$, thus
$$
[D,X] = 1_{M_n(B(H))} + \begin{pmatrix}
0 & 0 & 0 & \dots & 0 & [v,b_1] + \delta b_2 + \delta b_1 [u,b_n]  \\
0 & 0 & 0 & \dots & 0 & [v,b_2] + [u,b_1] + 2 \delta b_3 + \delta b_2 [u,b_n]  \\
0 & 0 & 0 & \dots & 0 & [v,b_3] + [u,b_2] + 3 \delta b_4 + \delta b_3 [u,b_n]  \\
\vdots & \vdots & \vdots & \ddots & \vdots & \vdots \\
0 & 0 & 0 & \dots & 0 & [v,b_{n-1}] + [u,b_{n-2}] + (n-1)\delta b_n + \delta b_{n-1} [u,b_n]  \\
0 & 0 & 0 & \dots & 0 & [v,b_{n}] + [u,b_{n-1}] + \delta b_{n} [u,b_n] - n \cdot 1_{B(H)}
\end{pmatrix}.
$$
\end{lemma}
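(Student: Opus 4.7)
The lemma is a matrix identity in $M_n(B(H))$ that one would establish by direct computation of $(DX-XD)_{ij}$ entry by entry, exploiting the sparsity of $X$ to reduce the row/column sums to at most two surviving terms each. The key preliminary observation is that $X$ has nonzero entries only on the first subdiagonal $\{i=j+1\}$ and in the last column $\{j=n\}$. Consequently $(DX)_{ij}$ collapses to $D_{i,j+1}$ for $j < n$ and to $\delta \sum_k D_{ik} b_k$ for $j = n$, while $(XD)_{ij}$ collapses to $D_{i-1,j} + \delta b_i D_{n,j}$ (with the convention $D_{0,\cdot} = 0$ when $i = 1$).

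The computation then splits into two cases. For $j < n$, substituting the four-part formula for $D$ yields matching subdiagonal $\tfrac{1}{\delta} u$ and diagonal $\tfrac{1}{\delta} v$ contributions in $D_{i,j+1}$ and $D_{i-1,j}$ that cancel; superdiagonal contributions $i \cdot 1_{B(H)}$ and $(i-1) \cdot 1_{B(H)}$ that survive only on the line $j = i$ and leave the net identity $1_{B(H)}$; and a last-column contribution to $D_{i,j+1}$ (active at $j = n-1$) that is cancelled by the $\delta b_i D_{n,n-1}$ term, since $D_{n,n-1} = \tfrac{1}{\delta} u$. This yields $[D,X]_{ij} = 1_{i=j} \cdot 1_{B(H)}$ for all $j < n$.

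For $j = n$, one expands $\delta \sum_k D_{ik} b_k$ by summing the four contributions from $D_{ik}$, and likewise expands $D_{i-1,n}$ and $\delta b_i D_{n,n}$. After subtraction, the subdiagonal and diagonal pieces of $D$ generate the commutators $[u, b_{i-1}]$ and $[v, b_i]$; the superdiagonal yields $i \delta b_{i+1}$; and the two last-column contributions combine into $\delta b_i [u, b_n]$. The additional overlap of the superdiagonal with the last column at the entry $D_{n-1,n}$ contributes an extra $-(n-1) \cdot 1_{B(H)}$ at $i = n$, which combines with the $+1_{B(H)}$ from the diagonal piece of the commutator to produce the displayed $-n \cdot 1_{B(H)} 1_{i=n}$ constant.

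The whole argument is routine bookkeeping rather than real mathematics; the only hazards are clerical. In particular, one must respect the conventions $b_0 = b_{n+1} = 0$ at the boundary rows $i = 1$ and $i = n$, and carefully account for the two overlapping entries of $D$ where the superdiagonal meets the last column at $(n-1,n)$ and where the diagonal meets the last column at $(n,n)$, which together produce the $n \cdot 1_{B(H)}$ correction in the bottom-right corner of $[D, X]$.
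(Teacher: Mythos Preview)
Your outline is correct and would establish the identity by a straightforward entry-by-entry check.  The paper takes a more structural route: it first reduces to $\delta=1$ via the rescaling $(\delta,u,v,b_i)\mapsto(1,\tfrac{1}{\delta}u,\tfrac{1}{\delta}v,\delta b_i)$, and then observes that (with $\delta=1$) one has the decomposition
\[
D \;=\; \mathrm{diag}(v)\;+\;X\,\mathrm{diag}(u)\;+\;N,
\]
where $N$ is the matrix with superdiagonal entries $1\cdot 1_{B(H)},2\cdot 1_{B(H)},\dots,(n-1)\cdot 1_{B(H)}$.  By linearity of the commutator in its first argument, together with the identity $[X\,\mathrm{diag}(u),X]=X\,[\mathrm{diag}(u),X]$, the computation then splits into the three easy pieces $[\mathrm{diag}(v),X]$, $X[\mathrm{diag}(u),X]$, and $[N,X]$, each of which is a one-line matrix calculation, and these are summed.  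This replaces your $j<n$ versus $j=n$ case split and the tracking of four overlapping summands in $D$ by a single structural observation, and it simultaneously explains \emph{why} $D$ has been chosen to have this shape: the last-column block of $D$ is precisely what the $X\,\mathrm{diag}(u)$ term forces.  Your direct method, by contrast, requires no such insight and makes the origin of each of the four terms $[v,b_i]$, $[u,b_{i-1}]$, $i\delta b_{i+1}$, $\delta b_i[u,b_n]$ in the last column of $[D,X]$ individually transparent.  Either approach is adequate for a routine verification of this kind.
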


\begin{proof}  The identity in the lemma is unchanged if one replaces $\delta,u,v,b_1,\dots,b_n$ with $1,\frac{1}{\delta} u, \frac{1}{\delta} v, \delta b_1, \dots, \delta b_n$ respectively.  Hence we may normalise $\delta=1$.
If we let $\mathrm{diag}(v) \in M_n(B(H))$ be the operator with diagonal entries $v$, thus
$$
\mathrm{diag}(v) =
\begin{pmatrix}
v & 0 & 0 & \dots & 0 & 0 \\
0 & v & 0 & \dots & 0 & 0 \\
0 & 0 & v & \dots & 0 & 0 \\
\vdots & \vdots & \vdots & \ddots & \vdots & \vdots \\
0 & 0 & 0 & \dots & v & 0 \\
0 & 0 & 0 & \dots & 0 & v
\end{pmatrix}
$$
then we clearly have
$$ [\mathrm{diag}(v), X] = 
\begin{pmatrix}
0 & 0 & 0 &\dots & 0 & [v,b_1] \\
0 & 0 & 0 &\dots & 0 & [v,b_2] \\
0 & 0 & 0 &\dots & 0 & [v,b_3] \\
\vdots & \vdots & \vdots & \ddots & \vdots & \vdots \\
0 & 0 & 0 & \dots & 0 & [v,b_{n-1}]\\
0 & 0 & 0 & \dots & 0 & [v,b_n]
\end{pmatrix}
$$
In a similar fashion, we can compute
\begin{align*}
[X \mathrm{diag}(u), X] &= X [\mathrm{diag}(u), X] \\
&= X \begin{pmatrix}
0 & 0 & 0 & \dots & 0 & [u,b_1] \\
0 & 0 & 0 & \dots & 0 & [u,b_2] \\
0 & 0 & 0 & \dots & 0 & [u,b_3] \\
\vdots & \vdots & \vdots & \ddots & \vdots & \vdots \\
0 & 0 & 0 & \dots & 0 & [u,b_{n-1}]\\
0 & 0 & 0 & \dots & 0 & [u,b_n]
\end{pmatrix} \\
&=
\begin{pmatrix}
0 & 0 & 0 & \dots & 0 & b_1 [u,b_n] \\
0 & 0 & 0 & \dots & 0 & b_2 [u,b_n] + [u,b_1] \\
0 & 0 & 0 & \dots & 0 & b_3 [u,b_n] + [u,b_2] \\
\vdots & \vdots & \vdots & \ddots & \vdots & \vdots \\
0 & 0 & 0 & \dots & 0 & b_{n-1} [u,b_n] + [u,b_{n-2}]\\
0 & 0 & 0 & \dots & 0 & b_n [u,b_n] + [u,b_{n-1}]
\end{pmatrix}.
\end{align*}
Finally, if we introduce the upper diagonal matrix $N$ with entries $1_{B(H)},2 \cdot 1_{B(H)},\dots,(n-1) \cdot 1_{B(H)}$, thus
$$ 
N \coloneqq 
\begin{pmatrix}
0 & 1_{B(H)} & 0 & \dots & 0 & 0 \\
0 & 0 & 2 \cdot 1_{B(H)} & \dots & 0 & 0 \\
0 & 0 & 0 & \dots & 0 & 0 \\
\vdots & \vdots & \vdots & \ddots & \vdots & \vdots \\
0 & 0 & 0 & \dots & 0 & (n-1) \cdot 1_{B(H)} \\
0 & 0 & 0 & \dots & 0 & 0
\end{pmatrix},
$$
then we have
$$ [N,X] = 
\begin{pmatrix}
1_{B(H)} & 0 & 0 & \dots & 0 & b_2 \\
0 & 1_{B(H)} & 0 & \dots & 0 & 2b_3 \\
0 & 0 & 1_{B(H)} & \dots & 0 & 3b_4 \\
\vdots & \vdots & \vdots & \ddots & \vdots & \vdots \\
0 & 0 & 0 & \dots & 1_{B(H)} & (n-1)b_n \\
0 & 0 & 0 & \dots & 0 & (1-n) 1_{B(H)}
\end{pmatrix}.
$$
Since
$$ D := \mathrm{diag}(v) + X \mathrm{diag}(u) + N $$
the claim follows.
\end{proof}

\begin{corollary}\label{reduct}  Let $u, v, b_1,\dots,b_n \in B(H)$.  Assume that for some $\delta>0$ we have the equations
\begin{equation}\label{system}
 [v, b_i] + [u, b_{i-1}] + i \delta b_{i+1} + \delta b_i [u, b_n] = 0
\end{equation}
for $i=2,\dots,n-1$, and also
\begin{equation}\label{system-n}
 [v, b_n] + [u, b_{n-1}] + \delta b_n [u, b_n] = n \cdot 1_{B(H)}
\end{equation}
Then, for any $\mu >0$, there exist matrices $D_\mu, X_\mu \in M_n(B(H))$ such that
$$ \| D_\mu \| \leq \frac{1}{\mu^2 \delta} \| u\| + \frac{1}{\mu \delta} \|v\| + n-1 + \delta \sum_{i=1}^n  \mu^{n-i-1} \|b_i\| \| u \| $$
$$ \| X_\mu \| \leq 1 + \delta \sum_{i=1}^n \mu^{n-i+1} \|b_i\| $$
$$ \| [D_\mu,X_\mu] - 1_{M_n(B(H))} \| \leq \mu^{n-1} \| [v,b_1] + \delta b_2 + \delta b_1 [u,b_n] \|.$$
\end{corollary}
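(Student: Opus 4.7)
The plan is to apply Lemma~\ref{comm} and then conjugate the resulting matrices $D, X$ by the diagonal rescaling $S_\mu \coloneqq \mathrm{diag}(\mu^{n-1}, \mu^{n-2}, \dots, 1) \in M_n(B(H))$, together with compensating scalar prefactors $\mu^{-1}$ and $\mu$. Under hypotheses \eqref{system} and \eqref{system-n}, the explicit formula for $[D, X] - 1_{M_n(B(H))}$ in Lemma~\ref{comm} would collapse to a matrix $E$ supported only at the $(1, n)$ entry, equal to $F \coloneqq [v, b_1] + \delta b_2 + \delta b_1 [u, b_n]$.

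Set $D_\mu \coloneqq \mu^{-1} S_\mu D S_\mu^{-1}$ and $X_\mu \coloneqq \mu S_\mu X S_\mu^{-1}$. The scalar prefactors cancel in the commutator, so $[D_\mu, X_\mu] - 1_{M_n(B(H))} = S_\mu E S_\mu^{-1}$; and since conjugation by $S_\mu$ multiplies the $(i, j)$ entry by $\mu^{j - i}$, the conjugated $E$ remains supported at $(1, n)$ with value $\mu^{n - 1} F$, giving the claimed commutator bound immediately.

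For the operator norm bounds, I would decompose $D_\mu$ as a sum of four disjointly supported summands---lower shift, diagonal scalar, integer upper shift, and last column---and apply the triangle inequality. Under the combined rescaling, $(i, j)$-entries of $D$ are multiplied by $\mu^{j - i - 1}$: the $u/\delta$ subdiagonal produces a lower shift of norm $\|u\|/(\mu^2 \delta)$, the $v/\delta$ diagonal gives $\|v\|/(\mu \delta)$, the integer superdiagonal $i \cdot 1_{B(H)}$ is unchanged and contributes norm $n - 1$, and the last-column entries $\delta b_i u$ at $(i, n)$ become single-entry matrices of norm $\mu^{n - i - 1} \delta \|b_i\| \|u\|$ summed over $i$. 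The analysis for $X_\mu$ is identical, with entries scaled by $\mu^{j - i + 1}$; the prefactor $\mu$ is inserted precisely so that the unit subdiagonal of $X$ returns to unit norm after conjugation, while the $\mu^{-1}$ prefactor on $D$ ensures the $u$-subdiagonal absorbs the full $\mu^{-2}$ damping that will matter for the later optimization in $\mu$. The argument is essentially bookkeeping, with no substantive obstacle beyond the mild nuisance that the last column of $D$ overlaps with the diagonal at $(n, n)$ and the superdiagonal at $(n-1, n)$; once one assigns each overlapping contribution consistently to exactly one of the four summands, each summand retains its clean structural form and the stated bound follows.
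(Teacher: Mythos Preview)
Your proposal is correct and matches the paper's proof essentially line for line: the paper also sets $D_\mu \coloneqq \mu^{-1} S_\mu D S_\mu^{-1}$ and $X_\mu \coloneqq \mu S_\mu X S_\mu^{-1}$ with $S_\mu = \mathrm{diag}(\mu^{n-1},\dots,1)$, writes out the resulting matrices explicitly, and then reads off the three bounds by the triangle inequality. Your four-summand decomposition of $D_\mu$ and your handling of the diagonal/superdiagonal overlaps in the last column are exactly the bookkeeping the paper leaves implicit.
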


\begin{proof}  Let $D,X$ be the matrices from Lemma \ref{comm}, and set $D_\mu := \frac{1}{\mu} S_\mu D S_\mu^{-1}$, $X_\mu := \mu S_\mu X S_\mu^{-1}$.  Thus
$$
D_\mu = \begin{pmatrix}
\frac{1}{\mu \delta} v & 1_{B(H)} & 0 & \dots & 0 & \mu^{n-2} \delta b_1 u \\
\frac{1}{\mu^2 \delta} u & \frac{1}{\mu \delta} v & 2 \cdot 1_{B(H)} & \dots & 0 & \mu^{n-3} \delta b_2 u \\
0 & \frac{1}{\mu^2 \delta} u & \frac{1}{\mu \delta} v & \dots & 0 & \mu^{n-4} \delta b_3 u \\
\vdots & \vdots & \vdots & \ddots & \vdots & \vdots \\
0 & 0 & 0 & \dots & \frac{1}{\mu \delta} v & (n-1) 1_{B(H)} + \delta b_{n-1} u \\
0 & 0 & 0 & \dots & \frac{1}{\mu^2 \delta} u & \frac{1}{\mu \delta} v + \mu^{-1} \delta b_n u
\end{pmatrix}
$$
and
$$
X_\mu = 
\begin{pmatrix}
0 & 0 & 0 & \dots & 0 & \mu^n \delta b_1 \\
1_{B(H)} & 0 & 0 & \dots & 0 & \mu^{n-1} \delta b_2 \\
0 & 1_{B(H)} & 0 & \dots & 0 & \mu^{n-2} \delta b_3 \\
\vdots & \vdots & \vdots & \ddots & \vdots & \vdots \\
0 & 0 & 0 & \dots & 0 & \mu^2 \delta b_{n-1} \\
0 & 0 & 0 & \dots & 1_{B(H)} & \mu \delta b_n
\end{pmatrix}
$$
and (by \eqref{system})
$$
[D_\mu, X_\mu] - 1_{M_n(B_H)} =
\begin{pmatrix}
0 & 0 & 0 & \dots & 0 & \mu^{n-1}([v,b_1] + \delta b_2 + \delta b_1 [u,b_n] ) \\
0 & 0 & 0 & \dots & 0 & 0 \\
0 & 0 & 0 & \dots & 0 & 0 \\
\vdots & \vdots & \vdots & \ddots & \vdots & \vdots \\
0 & 0 & 0 & \dots & 0 & 0 \\
0 & 0 & 0 & \dots & 0 & 0
\end{pmatrix}.
$$
The claim now follows from the triangle inequality.  (One could use some orthogonality to sharpen the bounds slightly if desired, but we will not do so here.)
\end{proof}

As mentioned in the introduction, we can write (without loss of generality) $H = H_1 \oplus H_2$ for some orthogonal subspaces $H_1,H_2$ isometric to $H$, and let $u,v \in B(H)$ be isometries from $H$ to $H_1,H_2$; in particular, $\|u\|=\|v\|=1$.  Writing $u^*, v^*$ for the adjoints of $u,v$, we easily verify the identities
\begin{equation}\label{usu}
u^* u = v^* v = uu^* + vv^* = 1; \quad u^* v = v^* u = 0.
\end{equation}
In particular, the map $z \mapsto (u^* z, v^* z)$ is a Hilbert space isometry from $H$ to $H \oplus H$, with inverse map $(z_1,z_2) \mapsto u z_1 + v z_2$; as a consequence, the map
\begin{equation}\label{iso}
x \mapsto \begin{pmatrix} u^* x u & u^* x v \\ v^* x u & v^* x v \end{pmatrix}
\end{equation}
is a Banach algebra isometry from $B(H)$ to $M_2(B(H))$, with inverse map
\begin{equation}\label{iso-2}
\begin{pmatrix} x_1 & x_2 \\ x_3 & x_4 \end{pmatrix} \mapsto ux_1 u^* + u x_2 v^* + v x_3 u^* + v x_4 v^*.
\end{equation}

Let $T: B(H)^n \to B(H)^{n-1}$ denote the linear operator
$$ T(b_i)_{i=1}^n \coloneqq ([v,b_i] + [u,b_{i-1}])_{i=2}^n.$$
We now construct a right inverse of $T$:

\begin{proposition}[Right inverse]\label{str} There exists a linear right-inverse $R: B(H)^{n-1} \to B(H)^n$ of $T$ obeying the bound
$$ \sup_{1 \leq i \leq n} \| (Rb)_i \| \leq 8 \sqrt{2} n^2 \sup_{2 \leq i \leq n} \| b_i \|$$
for all $b = (b_i)_{i=2}^n \in B(H)^{n-1}$, where $(Rb)_1,\dots,(Rb)_n$ are the coefficients of $Rb \in B(H)^n$.
\end{proposition}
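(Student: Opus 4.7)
The plan is to construct the right inverse $R$ explicitly. Denote the input by $c = (c_i)_{i=2}^n$ and the output by $b = Rc = (b_j)_{j=1}^n$, so that we seek $b$ solving $[v, b_i] + [u, b_{i-1}] = c_i$ for $i = 2, \ldots, n$. Using the Cuntz relation $uu^* + vv^* = 1_{B(H)}$, decompose $c_i = u(u^*c_i) + v(v^*c_i)$, and match the ``$u$-coefficients'' and ``$v$-coefficients'' on the two sides of the equation (using $u^*v = v^*u = 0$ to separate them). This rewrites the system as the equivalent pair of fixed-point equations
\begin{align*}
(I - L_v)(b_i) &= v^* c_i + v^* b_{i-1} u, \\
(I - L_u)(b_{i-1}) &= u^* c_i + u^* b_i v,
\end{align*}
for $i = 2, \ldots, n$, where $L_v(x) = v^*xv$ and $L_u(x) = u^*xu$ are the natural compression contractions on $B(H)$.

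I would then attempt to solve these fixed-point equations by iterated substitution, producing a Neumann-like explicit formula for each $b_j$. The naive single-variable series $(I - L_v)^{-1} = \sum_k L_v^k$ does not converge in operator norm (since $\|L_v\| = 1$), but the source terms here contain the cross-couplings $v^* b_{i-1} u$ and $u^* b_i v$, which shift the variable index by one upon each substitution. Iterating the shifts at most $n$ times pushes the index outside the admissible range $\{1, \ldots, n\}$, causing the iteration to terminate with a \emph{finite} formula. In particular, each $(Rc)_j$ should take the form of a finite sum of monomials $w^* c_k w'$ with $w, w'$ words in $\{u, v\}$.

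Each such monomial has norm at most $\|c\|_\infty$, since $u, v, u^*, v^*$ are all contractions. The combinatorial count produces at most $O(n^2)$ such monomials per index $j$ (one factor of $n$ from the propagation depth along the index and another from the number of compression iterations absorbed per level); multiplying by a $\sqrt 2$ factor arising from the ``column isometry'' $\xi \mapsto (u\xi, v\xi) \in H \oplus H$ (which has operator norm $\sqrt{2}$) should yield the claimed constant $8\sqrt{2}$. The main obstacle I anticipate is verifying the exact termination of the iteration and the identity $TR = \mathrm{id}$: this requires careful bookkeeping of how the Cuntz relations $u^*u = v^*v = 1$, $u^*v = v^*u = 0$, and $uu^* + vv^* = 1$ cause the various cross-terms produced during iteration to combine precisely into the desired output.
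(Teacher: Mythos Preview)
Your Cuntz decomposition is valid, and the resulting pair of equations
\[
(I-L_v)(b_i)=v^*c_i+v^*b_{i-1}u,\qquad (I-L_u)(b_{i-1})=u^*c_i+u^*b_iv
\]
is indeed equivalent to the original system. The gap is the claim that iterated substitution \emph{terminates}. Only the cross terms $v^*b_{i-1}u$ and $u^*b_iv$ shift the index; the diagonal terms $L_v(b_i)=v^*b_iv$ and $L_u(b_{i-1})=u^*b_{i-1}u$ keep the index fixed and have operator norm $1$. Each substitution produces both kinds of terms, so after $m$ rounds you still carry remainders like $(v^*)^m b_i v^m$ that do not vanish. (Already for $n=2$ the two coupled equations in $b_1,b_2$ never close up.) So the formula is not finite, and your monomial count is not a bound on anything that converges.

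The paper faces exactly this obstruction and resolves it differently. It writes down an explicit approximate right inverse $L(x_i)_{i=2}^n=\bigl(-\tfrac12 x_iv^*-\tfrac12 x_{i+1}u^*\bigr)_{i=1}^n$, computes $TL=1-E$ with
\[
(Ex)_i=\tfrac12\bigl(vx_iv^*+vx_{i+1}u^*+ux_{i-1}v^*+ux_iu^*\bigr),
\]
and then shows that $E$ is a strict contraction, not in the sup norm (where $\|E\|=1$), but in the weighted norm $\|(x_i)\|'=\sup_i(2-i^2/n^2)^{-1/2}\|x_i\|$. The concavity of the weight in $i$ is what buys the gap: a Frobenius-norm estimate on the $2\times 2$ block $\tfrac12\begin{pmatrix}x_i&x_{i+1}\\x_{i-1}&x_i\end{pmatrix}$ yields $\|Ex\|'\le(1-\tfrac{1}{8n^2})\|x\|'$, so the Neumann series for $(1-E)^{-1}$ converges with norm $\le 8n^2$. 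The $\sqrt2$ in the final constant comes from the ratio of the weighted and unweighted norms, not from the map $\xi\mapsto(u\xi,v\xi)$. If you want to salvage your framework, you would need an analogous device to force a strict contraction; the index-shifting alone will not do it.
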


\begin{proof}  Let $L: B(H)^{n-1} \to B(H)^n$ denote the linear operator
$$ L(x_i)_{i=2}^n \coloneqq \left(- \frac{1}{2} x_i v^* -\frac{1}{2} x_{i+1} u^*\right)_{i=1}^n$$
with the convention that $x_1=x_{n+1}=0$.  Then by \eqref{usu}, the composition $TL: B(H)^{n-1} \to B(H)^{n-1}$ can be split as $TL = 1-E$, where $E: B(H)^{n-1} \to B(H)^{n-1}$ is the operator
$$ E(x_i)_{i=2}^n \coloneqq \left(\frac{1}{2} (v x_i v^* + v x_{i+1} u^* + u x_{i-1} v^* + u x_i u^*)\right)_{i=1}^n.$$
Let us place a slightly weighted norm $\| \|'$ on $B(H)^{n-1}$ by the formula
$$ \| (x_i)_{i=2}^n \|' \coloneqq \sup_{2 \leq i \leq n} \left(2 - \frac{i^2}{n^2}\right)^{-1/2} \| x_i \|,$$
the key point being that the weight $2 - \frac{i^2}{n^2}$ ranges between $1$ and $2$ and is slightly concave in $i$.
If
$$ \| (x_i)_{i=2}^n \|' \leq 1$$
then by hypothesis we have
\begin{equation}\label{xi}
 \| x_i \| \leq \left(2 - \frac{i^2}{n^2}\right)^{1/2}
\end{equation}
for all $1 \leq i \leq n+1$.  For each $2 \leq i \leq n$, we see from the isometries \eqref{iso}, \eqref{iso-2} that the operator 
$\frac{1}{2} (v x_i v^* + v x_{i+1} u^* + u x_{i-1} v^* + u x_i u^*) \in B(H)$ has the same operator norm as the matrix
$$ \frac{1}{2} \begin{pmatrix} x_i & x_{i+1} \\ x_{i-1}  & x_i \end{pmatrix} \in M_2(B(H)).$$
This norm is in turn bounded by the operator norm of the real $2 \times 2$ matrix
$$ \frac{1}{2} \begin{pmatrix} \|x_i \| & \| x_{i+1} \| \\ \| x_{i-1} \| & \| x_i \| \end{pmatrix};$$
bounding this norm by the Frobenius norm and using \eqref{xi}, we conclude that
\begin{align*}
\| (Ex)_i \|^2 &\leq \frac{1}{4} \left( (2 - \frac{i^2}{n^2}) + (2 - \frac{(i+1)^2}{n^2}) + (2 - \frac{(i-1)^2}{n^2}) + (2 - \frac{i^2}{n^2})\right) \\
&= 2 - \frac{i^2}{n^2} - \frac{1}{2n^2} \\
&\leq (1 - \frac{1}{4n^2}) (2 - \frac{i^2}{n^2}) \\
&\leq \left(1 - \frac{1}{8n^2}\right)^2 \left(2 - \frac{i^2}{n^2}\right) 
\end{align*}
and hence
$$ \| Ex \|' \leq \left(1 - \frac{1}{8n^2}\right) \|x\|'$$
for all $x \in B(H)^{n-1}$.  By Neumann series, the operator $1-E$ is then invertible with
$$ \| (1-E)^{-1} x \|' \leq 8n^2 \|x\|'.$$
If we then set $R := L(1-E)^{-1}$, and note that the weights $\left(2 - \frac{i^2}{n^2}\right)^{-1/2}$ vary between $1/\sqrt{2}$ and $1$, we obtain the claim.
\end{proof}

Let $u,v$ be as in the above proposition, and set
\begin{equation}\label{delta-n5}
 \delta \coloneqq \frac{1}{2000 n^5}.
\end{equation}
The system \eqref{system}, \eqref{system-n} can be written as
$$ Tb = a + \delta F(b) + \delta G(b,b)$$
where $b = (b_i)_{i=1}^n$, $a \in B(H)^{n-1}$ is the constant
$$ a \coloneqq ( 0, \dots, 0, n ),$$
$F: B(H)^n \to B(H)^{n-1}$ is the linear operator
\begin{equation}\label{fbn}
 F(b_i)_{i=1}^n \coloneqq ( -b_2, -2b_3, \dots, -(n-1)b_n, 0 )
\end{equation}
and $G: B(H)^n \times B(H)^n \to B(H)^{n-1}$ is the bilinear operator
\begin{equation}\label{gbn}
 G((b_i)_{i=1}^n, (b'_i)_{i=1}^n) \coloneqq (-b_2 [u,b'_n], \dots, -b_n [u,b'_n]).
\end{equation}
To solve this equation, we use the following abstract lemma:

\begin{lemma}  Let $X,Y$ be Banach spaces, let $T, F: X \to Y$ be bounded linear operators, let $G: X \times X \to Y$ be a bounded bilinear operator (thus $\|G(x,x')\|_Y \leq \|G\| \|x\|_X \|x'\|_X$ for some finite quantity $\|G\|$), and let $a \in Y$.  Suppose that $T$ has a 
bounded linear right inverse $R: Y \to X$.  Then, if $\delta>0$ obeys the inequality
\begin{equation}\label{cor}
\delta (2 \|F\| \|R\| + 4 \|G\| \|R\|^2 \|a\|_Y) < 1
\end{equation} 
there exists $b \in X$ with $\|b\|_X \leq 2 \| R \| \|a\|_Y$ that solves the equation
$$ Tb = a + \delta F(b) + \delta G(b,b).$$
\end{lemma}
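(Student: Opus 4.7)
The plan is to find $b$ as a fixed point of the map $\Phi: X \to X$ defined by
\[ \Phi(b) \coloneqq R\bigl(a + \delta F(b) + \delta G(b,b)\bigr). \]
Since $R$ is a right inverse of $T$, any fixed point of $\Phi$ automatically satisfies $Tb = TR(a + \delta F(b) + \delta G(b,b)) = a + \delta F(b) + \delta G(b,b)$, as desired. I would apply the Banach contraction mapping theorem on the closed ball $B \coloneqq \{b \in X : \|b\|_X \leq 2\|R\|\|a\|_Y\}$, which is a complete metric space.

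First, I would show $\Phi$ maps $B$ into itself. For $b \in B$, the triangle inequality and the bilinear bound give
\[ \|\Phi(b)\|_X \leq \|R\|\bigl(\|a\|_Y + \delta\|F\|\|b\|_X + \delta\|G\|\|b\|_X^2\bigr) \leq \|R\|\|a\|_Y\bigl(1 + \delta(2\|F\|\|R\| + 4\|G\|\|R\|^2\|a\|_Y)\bigr), \]
and the hypothesis \eqref{cor} forces the factor in parentheses to be strictly less than $2$, so $\|\Phi(b)\|_X < 2\|R\|\|a\|_Y$.

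Next, I would verify the contraction property. Using the polarization-type identity $G(b,b) - G(b',b') = G(b-b', b) + G(b', b-b')$, for $b,b' \in B$ we get
\[ \|\Phi(b) - \Phi(b')\|_X \leq \|R\|\bigl(\delta\|F\|\|b-b'\|_X + \delta\|G\|(\|b\|_X + \|b'\|_X)\|b-b'\|_X\bigr), \]
which, after inserting $\|b\|_X, \|b'\|_X \leq 2\|R\|\|a\|_Y$, is bounded by $\delta(\|F\|\|R\| + 4\|G\|\|R\|^2\|a\|_Y)\|b-b'\|_X$. This is strictly less than $\|b-b'\|_X$ by \eqref{cor} (since $\|F\|\|R\| \leq 2\|F\|\|R\|$).

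There is no real obstacle here: the main point is simply that the hypothesis \eqref{cor} has been set up with the factor $2$ in front of $\|F\|\|R\|$ precisely to cover both the self-map verification (which needs room for the added $\|a\|_Y$) and the contraction verification simultaneously. The Banach fixed point theorem then produces the desired $b \in B$, automatically giving $\|b\|_X \leq 2\|R\|\|a\|_Y$.
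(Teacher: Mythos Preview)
Your proof is correct and follows essentially the same contraction-mapping argument as the paper; the only cosmetic difference is that the paper sets $b = Rc$ and runs the fixed-point iteration on $c$ in the ball $\{c \in Y : \|c\|_Y \leq 2\|a\|_Y\}$, whereas you iterate directly on $b$ in the ball $\{b \in X : \|b\|_X \leq 2\|R\|\|a\|_Y\}$. The self-map and Lipschitz estimates are identical in both versions.
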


\begin{proof}  We use the ansatz $b = Rc$, then it suffices to find fixed point in the ball
$$ B \coloneqq \{ c \in Y: \|c\| \leq 2 \|a\|_Y \}$$
of the map
$$ \Phi \coloneqq c \mapsto a + \delta F(Rc) + \delta G( Rc, Rc ).$$
By the contraction mapping theorem, it suffices to show that $\Phi$ maps $B$ to $B$ with the bound $\| \Phi(c) - \Phi(c') \|_Y \leq \alpha \| c-c' \|_Y$ for some $\alpha < 1$.

Let $c \in B$, then by the triangle inequality
\begin{align*}
 \| \Phi(c) \| &\leq \| a \|_Y + \delta \|F\| \|R \| \|c\|_Y + \delta \|G\| \| R\|^2 \|c\|_Y^2 \\
&\leq \|a\|_Y + \delta ( 2 \|F\| \|R\| \|a\|_Y + 4 \|G\| \|R\|^2 \|a\|_Y^2 ) \\
&\leq 2 \|a\|_Y 
\end{align*}
by the hypothesis on $\delta$.  Thus $\Phi$ maps $B$ to itself.  If $c' \in B$, then we have
$$ \Phi(c) - \Phi(c') = \delta F(R(c-c')) + \delta G( R(c-c'), Rc ) + \delta G( R(c'), R(c-c'))$$
and thus by the triangle inequality
\begin{align*}
 \| \Phi(c) - \Phi(c') \| &\leq \delta \|F\| \|R \| \|c-c'\|_Y + \delta \|G\| \| R\|^2 \|c-c'\|_Y \|c\|_Y + \delta \|G\| \| R\|^2 \|c'\|_Y \|c-c'\|_Y  \\
&\leq \delta ( \|F\| \|R\| + 4 \|G\| \|R\|^2 \|a\|_Y^2 ) \|c-c'\|_Y, 
\end{align*}
and the claim follows by the hypothesis on $\delta$.
\end{proof}

We apply this proposition with $X = B(H)^n$ and $Y = B(H)^{n-1}$, with norms
$$ \| (b_i)_{i=1}^n \|_X \coloneqq \sup_{1 \leq i \leq n} \|b_i\|$$
and
$$ \| (c_i)_{i=2}^n \|_Y \coloneqq \sup_{2 \leq i \leq n} \|c_i\|.$$
From Proposition \ref{str} we have
$$ \|R\| \leq 8 \sqrt{2} n^2.$$
We also clearly have
$$ \|a\|_Y \leq n; \quad \|F\| \leq n-1 $$
and from the triangle inequality (and the bound $\|u\| \leq 1$)
$$ \|G\| \leq 2.$$
The condition \eqref{cor} is then implied by
$$ \delta ( 16 \sqrt{2} (n-1) n^2 + 1024 n^5) < 1$$
which follows from the choice \eqref{delta-n5}.  This gives a solution $b = (b_i)_{i=1}^n$ to \eqref{system}, \eqref{system-n} such that
$$ \|b_i \| \leq 2 \|R\| \|a\|_Y \leq 16 \sqrt{2} n^3$$
for all $1 \leq i \leq n$.  If we apply Corollary \ref{reduct} with $\mu = \frac{1}{2}$, we conclude that
\begin{align*}
 \| D_\mu \| &= O( n^5 ) \\
 \|X_\mu\| &= O( 1 ) \\
 \| [D_\mu,X_\mu] - 1_{M_n(B(H))} \| &= O( n^3 2^{-n} ).
\end{align*}
If we set $n = \lfloor C \log \frac{1}{\eps} \rfloor$ for a sufficiently large absolute constant $C$, we will then have
$$ \|D_\mu\| \|X_\mu \| = O( \log^{8} \frac{1}{\eps} )$$
and
$$ \| [D_\mu,X_\mu] - 1_{M_n(B(H))} \| \leq \eps.$$
Since $M_n(B(H))$ is isometric to $B(H)$, we obtain the claim.

\section{Open questions}

Let $K(H)$ denote the ideal of $B(H)$ consisting of compact operators, and let $\C + K(H)$ denote the subspace of $B(H)$ consisting of operators of the form $\lambda \cdot 1_{B(H)} + T$ for some $T \in K(H)$.  In \cite[Theorem 2.1]{popa}, it was shown that if $A \in B(H)$ obeys the bounds
$$ \|A \| = O(1) $$
and
\begin{equation}\label{ax}
 \| A \| = O( \operatorname{dist}( A, \C + K(H) )^{2/3} )
\end{equation}
then there exist operators $D,X \in B(H)$ with $\|D\|, \|X\| = O(1)$ such that $A = [D,X]$.  (In fact, a more general statement was proven in that paper, in which one works with properly infinite $W^*$-algebras with arbitrary centre, as opposed to an operator (or bounded sequence of operators) in $B(H)$; see \cite{popa} for further details.)  The question was posed in \cite[Remark 2.9]{popa} as to whether the condition \eqref{ax} can be relaxed.  Theorem \ref{main} suggests that it may be possible to replace \eqref{ax} with a logarithmic bound
$$
 \| A \| = O\left( \log^{-C} \frac{1}{\operatorname{dist}( A, \C + K(H) )} \right)
$$
for some absolute constant $C$, or at least
$$
 \| A \| = O\left( \operatorname{dist}( A, \C + K(H) )^c \right)
$$
for an arbitrary constant $c>0$.  There are however a number of technical issues that would need to be resolved if one wanted to adapt the methods in this paper to this problem.  A model case of the problem would be if $A \in M_n(B(H)) \equiv B(H)$ took the form
$$
A = 
\begin{pmatrix}
\lambda \cdot 1_{B(H)} + T & 0 & 0 & \dots & 0 & \eps S \\
0 & \lambda \cdot 1_{B(H)} & 0 & \dots & 0 & 0 \\
0 & 0 & \lambda \cdot 1_{B(H)} & \dots & 0 & 0 \\
\vdots & \vdots & \vdots & \ddots & \vdots & \vdots \\
0 & 0 & 0 & \dots & \lambda \cdot 1_{B(H)} & 0 \\
0 & 0 & 0 & \dots & 0 & \lambda \cdot 1_{B(H)}
\end{pmatrix}
$$
for some non-trivial operator $S$ of bounded operator norm $O(1)$ (e.g., an isometry of infinite deficiency), some $0 < \eps \leq 1$, a complex number $\lambda$ of size $O(\eps^c)$ for some small $c>0$, and a finite rank operator $T$ that is also of operator norm $O(\eps^c)$, where one assumes that $n$ is sufficiently large depending on $c$, and $\eps$ sufficiently small depending on $c$ and $n$.  The methods in this paper work best when $|\lambda|$ is reasonably large (in particular, much larger than $\eps$) and the finite rank operator $T$ is absent; however, even then there is a non-trivial difficulty because the contraction mapping arguments used in this paper do not easily allow one to prescribe the value of top right coefficient $[v,b_1] + \delta b_2 + \delta b_1 [u,b_n]$ appearing in Lemma \ref{comm}, and in particular to set it equal to $S$.  Furthermore, even if this difficulty is resolved, it seems that some additional argument would be required to handle the finite rank perturbation $T$, as well as the case when $\lambda$ is small (e.g. if $\lambda=0$).  We were unable to resolve these issues satisfactorily.   See also \cite[Remark 2.8]{popa} for some closely related model problems.


\begin{thebibliography}{10}

\bibitem{az}
C. Apostol, L. Zsid\'o, \emph{Ideals in $W^*$-algebras and the function $\eta$ of A. Brown and C. Pearcy}, 
Rev. Roumaine Math. Pures Appl. \textbf{18} (1973), 1151--1170. 

\bibitem{bp}
A. Brown, C. Pearcy, \emph{Structure of commutators of operators} Ann. of Math. \textbf{82} (1965), 112--127. 

\bibitem{pna}
S. Pironio, M. Navascu\'es, A. Ac\'in, \emph{Convergent relaxations of polynomial optimization problems with noncommuting variables}, SIAM J. Optim. \textbf{20} (2010), no. 5, 2157--2180. 

\bibitem{popa}
S. Popa, \emph{On commutators in properly infinite $W^*$-algebras}, Invariant subspaces and other topics (Timişoara/Herculane, 1981), pp. 195--207.

\bibitem{wielandt}
H. Wielandt, \emph{\"Uber die Unbeschr\"anktheit der Operatoren der Quantenmechanik}, Math. Ann. Band \textbf{121} (1949–1950), S. 21.

\bibitem{wintner}
A. Wintner, \emph{The unboundedness of quantum-mechanical matrices}, Physical Rev. Band \textbf{71} (1947), 738--739.


\end{thebibliography}
\end{document}